\newclass{\EnumP}{EnumP}
\newclass{\DSECP}{DSECP}
\newcommand{\CT}{\texttt{colored-tree}\xspace}
\newcommand{\structure}[1]{\ensuremath{\left\langle#1\right\rangle}\xspace}
\newcommand{\set}[1]{\ensuremath{\left\{#1\right\}}\xspace}
\lstdefinelanguage{mio}{
  morekeywords={if,then,return,else,do,for,end ,int,False, procedure,in}
}
\newcommand{\ie}{\emph{i.e.}\@\xspace}
\newcommand{\wrt}{\emph{w.r.t.}\@\xspace}
\newcommand{\N}{\mathbb{N}\xspace}
\DeclareMathOperator{\lcm}{lcm}
\definecolor{bleu}{rgb}{0, 0.6, 0.8}
\definecolor{rose}{rgb}{0.8, 0, 0.4}
\definecolor{vert}{rgb}{0, 0.6, 0.4}
\definecolor{yellow}{rgb}{0.6, 1, 0.75}
\definecolor{orange}{rgb}{0.7, 0.8, 0.5}
\definecolor{bleu}{rgb}{0, 0.6, 0.8}
\definecolor{rose}{rgb}{0.8, 0, 0.4}
\definecolor{vert}{rgb}{0, 0.6, 0.4}
\title{Solving Equations on Discrete Dynamical Systems}
\author{Alberto Dennunzio}{DISCo - Università degli Studi di Milano-Bicocca, Italy}{dennunzio@disco.unimib.it}{}{}
\author{Enrico Formenti}{Universit\'{e} C\^{o}te d'Azur, CNRS, I3S, Nice, France}{enrico.formenti@univ-cotedazur.fr}{}{}
\author{Luciano Margara}{Università degli Studi di Bologna, Campus di Cesena, Cesena, Italy}{margara@cs.unibo.it}{}{}
\author{Valentin Montmirail}{Universit\'{e} C\^{o}te d'Azur, CNRS, I3S, Nice, France}{valentin.montmirail@univ-cotedazur.fr}{}{}
\author{Sara Riva}{Universit\'{e} C\^{o}te d'Azur, CNRS, I3S, Nice, France}{sara.riva@univ-cotedazur.fr}{}{}
\authorrunning{Dennuzio et al.}
\keywords{Boolean Automata Networks, Discrete Dynamical Systems, Decidability}
\begin{document}

\maketitle


\begin{abstract}
Boolean automata networks, genetic regulation networks, and metabolic networks are just a few examples of biological modelling by discrete dynamical systems (DDS). 
A major issue in modelling is the verification of the model against the experimental data or inducing the model under uncertainties in the data. 
Equipping finite discrete dynamical systems with an algebraic structure of commutative semiring provides a suitable context for hypothesis verification on the dynamics of DDS. 
Indeed, hypothesis on the systems can be translated into polynomial equations over DDS. 
Solutions to these equations provide the validation to the initial hypothesis. 
Unfortunately, finding solutions to general equations over DDS is undecidable. 
In this article, we want to push the envelope further by proposing a practical approach for some decidable cases in a suitable configuration that we call the \textit{Hypothesis Checking}. 
We demonstrate that for many decidable equations all boils down to a ``simpler'' equation. 
However, the problem is not to decide if the simple equation has a solution, but to enumerate all the solutions in order to verify the hypothesis on the real and undecidable systems. 
We evaluate experimentally our approach and show that it has good scalability properties.
\end{abstract}


%
\section{\bf Scientific Background}
%

Boolean automata networks have been heavily used in the study of systems biology \cite{Bornholdt2008,sene2012}. The main drawback of the approach by automata network is in the very first step, namely when one induces the network from the experiments. Indeed, most of the time the knowledge of the network is partial and hypotheses are made about its real structure. Those hypotheses must be verified either by further experiments or by the study of the dynamical evolution of the network compared to the expected behaviour provided by the experimental evidences.

In~\cite{dorigatti2018}, an abstract algebraic setting for representing the dynamical evolution of finite discrete dynamical systems has been proposed. 
In the following, we denote by $R$, the commutative semi-ring of the DDS.

The basic idea is to identify a discrete dynamical system with the graph of its dynamics (finite graphs having out-degree exactly $1$) and then define operations $+$ and $\cdot$ which compose dynamical systems to obtain larger ones.

Indeed, a discrete dynamical system (DDS) is a structure $\structure{\chi,f}$ where $\chi$ is a finite set called \textbf{the set of states} and $f: \chi \to \chi$ is a function called the \textbf{next state map}. Any DDS $\structure{\chi,f}$ can be identified with its \textbf{dynamics graph} which is a structure $G\equiv\structure{V,E}$ where $V=\chi$ and $E=\set{(a,b)\in V\times V,\,f(a)=b}$. From now on, when speaking of a DDS, we will always refer to its dynamics graph.

Given two DDS $G_1=\structure{V_1,E_1}$ and $G_2=\structure{V_2,E_2}$ their \textbf{sum} $G_1 + G_2$ is defined as $\structure{V_1\cupdot V_2,E_1\cupdot E_2}$, where $\cupdot$ denotes the disjoint union. The \textbf{product} $G_1\cdot G_2$ is the structure $\structure{V', E'}$ where $V'=V_1\times V_2$ and $E'=\{((a,x),(b,y))\in V'\times V', (a,b)\in E_1\text{ and }(x,y)\in E_2\}$. 
It is easy to see that $F\equiv\structure{\chi, +, \cdot}$ is a commutative semiring in which $\structure{\emptyset,\emptyset}$ is the neutral element \wrt $+$ and $\structure{\set{a}, \set{(a,a)}}$ is the neutral element \wrt $\cdot$ operation. 

Now, consider the semiring $R[X_1,X_2,\ldots,X_n]$ of polynomials over $R$ in the variables $X_i$, naturally induced by $R$. 
Let us go back to our initial motivation. Assume that some parts of the overall dynamics $a_1, a_2, \ldots, a_k$ are known, then the following equation represents a hypothesis on the overall structure of the expected dynamical system $C$ on the basis of the known data $a_1, \ldots, a_k$, where all the coefficients, variables and $C$ are DDS.
\begin{equation} 
	a_1\cdot X_1 + a_2\cdot X_2 + \ldots + a_k\cdot X_k=C
	\label{eq:problemSolvedOriginal}
\end{equation}
\textbf{The hypotheses are verified whenever the previous equation admits a solution, therefore providing a way to solve such equation can be used to check hypotheses against a given discrete dynamical system}. For the sake of clarity, we denote our unknown variables as $X_{i}$, whereas they, in fact, represent any monomial of the form $x_{i}^{w_i}$. The following fundamental result states that solving polynomial equations over DDS is not an easy task.

\begin{theorem}[\cite{dorigatti2018}]
	\label{th:fundamental}\mbox{}\\ 
	Given two polynomials $P(x_1, \ldots, x_n)$ and $Q(x_1, \ldots , x_n)$ over $R[x_1, \ldots, x_n]$, consider the following equation 
	\begin{equation}\label{eq:fullpoly}
	P(x_1, \ldots , x_n) = Q(x_1, \ldots , x_n).
	\end{equation}
	The problem of finding a solution to Equation~\ref{eq:fullpoly} is undecidable.
	Moreover, if Equation~\ref{eq:fullpoly} is linear or quadratic, then finding a solution is in \NP. 
	Finally, when $P(x) = const$, where the polynomial is in a single variable and
	all its coefficients are systems consisting of self-loops only, the equation is 
	solvable in polynomial time.
	\end{theorem}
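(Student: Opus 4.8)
The engine for all three parts is the family of \emph{counting homomorphisms} from the semiring $F$ into $(\N,+,\cdot)$. Writing $\nu(G)$ for the number of states of $G$, the disjointness of $+$ and the Cartesian product underlying $\cdot$ give $\nu(G_1+G_2)=\nu(G_1)+\nu(G_2)$ and $\nu(G_1\cdot G_2)=\nu(G_1)\nu(G_2)$, so $\nu$ is a semiring homomorphism. More refined invariants are homomorphisms too: for each $d\ge 1$ let $a_d(G)$ count the states $v$ with $f^d(v)=v$, and for each $k\ge 0$ let $b_k(G)$ count the states reaching a cycle within $k$ steps. Since $(f\times g)^t(u,v)=(f^t u, g^t v)$, a pair $(u,v)$ is fixed by the $d$-th iterate iff both coordinates are, and its transient length is the maximum of the two; hence $a_d$ and $b_k$ are also homomorphisms $F\to\N$. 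I would use these maps to transport the semiring equation to integer (Diophantine) equations, which is simultaneously the source of the hardness and, in the restricted cases, the source of the tractability.

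\textbf{Undecidability.} First I would observe that the systems consisting of self-loops only form a sub-semiring isomorphic to $(\N,+,\cdot)$: the system $S_c$ with $c$ self-loops satisfies $S_c+S_{c'}=S_{c+c'}$ and $S_c\cdot S_{c'}=S_{cc'}$, and $\nu$ restricts to an isomorphism there. The plan is to reduce Hilbert's Tenth Problem over $\N$. Given a Diophantine equation I would split it as $p^+=p^-$ with nonnegative integer coefficients (legitimate because the semiring has no subtraction, which is exactly why the target form is $P=Q$ rather than $P=0$) and replace every coefficient $c$ by $S_c$. Forward: a solution $\vec m\in\N^n$ yields the assignment $X_j=S_{m_j}$, which keeps every computation inside the self-loop sub-semiring and hence satisfies the DDS equation. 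Backward: applying $\nu$ to any DDS solution returns $(\nu(X_1),\dots,\nu(X_n))\in\N^n$ solving the original Diophantine equation. Thus the DDS equation is solvable iff the Diophantine one is, and since this is a special case of Equation~\ref{eq:fullpoly}, undecidability follows.

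\textbf{Membership in \NP.} Here the point is a polynomial bound on the size of a solution. For the motivating form $a_1\cdot X_1+\dots+a_k\cdot X_k=C$, applying $\nu$ gives $\sum_i\nu(a_i)\,\nu(X_i)=\nu(C)$, so every $\nu(X_i)\le\nu(C)$; in the quadratic case a monomial contributes a factor $\nu(a)\,\nu(X_i)\nu(X_j)\le\nu(C)$, and a nonempty monomial forces each of its factors to have at least one state, so again every variable can be taken to have at most $\nu(C)$ states (variables that occur only in vanishing monomials are set to the empty system). A solution is therefore a certificate of size polynomial in the input. To verify it I would evaluate the polynomial on the candidate, noting that each intermediate product is a sub-factor of a monomial and hence also bounded by $\nu(C)$ states, and then test equality with the right-hand side. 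Equality of DDS is isomorphism of functional graphs, which is in polynomial time (canonical forms for the rooted trees hanging off each cycle, combined with necklace canonicalisation of the cycles). This places the problem in \NP.

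\textbf{Polynomial time.} When the single-variable polynomial has self-loop coefficients, multiplication by $S_{c_i}$ is simply ``take $c_i$ disjoint copies'', so the equation reads $C=\sum_i c_i\,x^i$. Applying each homomorphism turns this into a univariate integer equation $\sum_i c_i\,t^i=h(C)$ in the unknown $t=h(x)$, for $h\in\{\nu\}\cup\{a_d\}\cup\{b_k\}$. Because the $c_i$ are nonnegative, $t\mapsto\sum_i c_i t^i$ is strictly increasing on $\N$, so each equation has at most one root, found by binary search in $[0,h(C)]$ in polynomial time. Solving the $\nu$-equation bounds $\nu(x)$, while solving the $a_d$- and $b_k$-equations for $d,k\le\nu(x)$ pins down the full cycle-length multiset and transient-depth profile of $x$. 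The main obstacle, and the step I expect to demand the most care, is reconstructing and verifying the actual tree shapes of $x$ from these invariants: this I would handle by a direct combinatorial analysis of how the powers $x^i$ distribute over the connected components of $C$, exploiting that the cyclic length and transient depth of a component of $x^i$ are governed by the $\lcm$ of cycle lengths and the maximum of transient depths inherited from the factors.
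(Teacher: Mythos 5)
First, note that the paper does not actually prove this theorem: it is imported verbatim from \cite{dorigatti2018}, so there is no in-paper argument to compare against. On its own merits, your undecidability argument is correct and is the standard one from that reference: the self-loop systems form a sub-semiring isomorphic to $(\N,+,\cdot)$, the counting map $\nu$ is a semiring homomorphism giving the backward direction of the reduction from Hilbert's Tenth Problem, and splitting a Diophantine equation as $p^+=p^-$ correctly accounts for the absence of subtraction. Your \NP\ argument is essentially right for the form $P(\vec x)=C$ with $C$ constant (which is the case the paper actually uses), though as stated the theorem allows variables on both sides, where $\nu$ alone does not bound a minimal solution (e.g.\ $X_1=X_1$); you would need to argue separately that a solvable linear or quadratic equation always admits a polynomially bounded solution, not just that all solutions are bounded.

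The genuine gap is in the polynomial-time part. Your invariants $\nu$, $a_d$, $b_k$ do determine $\nu(x)$, the multiset of cycle lengths of $x$, and the number of states at each distance from the cycles, but these data do \emph{not} determine a DDS up to isomorphism, and isomorphism is what equality in $R$ means. Concretely, take a single fixed point $c$ with two states $u_1,u_2$ mapping to $c$ and two states $v_1,v_2$; sending both $v_i$ to $u_1$, versus sending $v_1\to u_1$ and $v_2\to u_2$, yields two non-isomorphic systems with identical $\nu$, $a_d$ for all $d$, and $b_k$ for all $k$. So after your binary searches you have only a profile of the candidate solution, not the solution, and deciding whether some system with that profile satisfies $\sum_i c_i x^i = C$ is exactly the content of the third claim. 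The sentence promising ``a direct combinatorial analysis of how the powers $x^i$ distribute over the connected components of $C$'' is where the actual proof would have to live --- one must reconstruct the transient trees of $x$ (essentially a root-extraction/division argument on unrolled trees and on cycles via $\gcd$/$\lcm$ bookkeeping as in Proposition~\ref{lem:prodm}) and show this can be done, or its impossibility detected, in polynomial time. As written, that step is asserted rather than proved, so the third part of the theorem is not established.
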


According to Theorem~\ref{th:fundamental}, solving polynomial equations of the type $P(x_1, \ldots, x_n) = const$ is in \NP\ even for quadratic polynomials. In order to overcome this issue, one can follow at least two strategies: either further constrain the polynomials or solve approximated equations which can provide information on the real solution.

In this article, we follow the second option. Indeed, we focus on strongly connected components (SCC) of the dynamics graph. Recall that SCC represents a very important feature in finite DDS since they are the attracting sets. These sets contain the asymptotic information about system evolution.
\section{\bf Methods}
%

In the dynamics graph, each component of a system can be divided in two parts: the transient part and the periodic part, see \cite{Mortveit2008} for more details. 
A point $x \in X$ of a discrete dynamical system $\langle X,f \rangle$ belongs to a cycle if there exists a positive number $ p \in \mathbb{N}$ such that $f^{p}(x)=x$. The smallest $p$ is the period of the cycle, and $x$ is periodic. The periodic part is the set of nodes periodic. All the others nodes are transient, but in this work, X is a finite set hence any state $x$ is \textbf{ultimately periodic} and in each component of the graph there is only one cycle of length at least 1.

Every finite DDS can be described as a sum of single components, and every component can be described, for our purposes, with the length of its period (strongly connected components in dynamics graphs are cycles). The transient part of a component is not relevant for the result of the sum and product operations when the equation is over SCC.

A single component of period $p$ is denoted $C^{1}_{p}$, while $C^{n}_{p}$ means that there are $n$ components of period p in the system. Therefore, if a system is composed by $n$ components, each of period $p_i$ with $i \in \set{1, \ldots,n}$, then $\bigoplus\limits_{i=1}^n  C^{1}_{p_{i}}$ completely describes the system where $\bigoplus$ denotes the sum of components since each component has only one period (see Figure~\ref{exass}).

\begin{figure}[!ht]
	\centering
	\begin{tikzpicture}[scale=.5,-latex,auto, semithick , font=\footnotesize, state/.style ={circle,draw,minimum width=7mm}]
    	\node[state,fill=rose!40] (a) at (0,0) {$a$};
	    \node[state,fill=vert!40] (d) at (8,0) {$d$};
	    \node[state,fill=vert!40] (e) at (10,0) {$e$};
	    \node[state,fill=bleu!40] (b) at (3,0) {$b$};
	    \node[state,fill=bleu!40] (c) at (5,0) {$c$};
	    \node[state,fill=vert!40] (f) at (12,0) {$f$};
	    \path (a) edge [loop left] node {} (a);
    	\draw[->] (b) to[bend left] (c);
	    \draw[->] (c) to[bend left] (b);
	    \draw[->] (d) to[bend left] (e);
	    \draw[->] (e) to[bend left] (f);
	    \draw[->] (f) to[bend left=40] (d);
	\end{tikzpicture} 		
	\caption{a DDS with three components: $(\textcolor{rose}{C^1_1} \oplus \textcolor{bleu}{C^1_2} \oplus \textcolor{vert}{C^1_3})$ in our notation.}
	\label{exass}
\end{figure}
\begin{remark}
When a system has several components with the same period, then their representation can be added. 
As an example, we have $C^1_2 \oplus C^1_2 = C^2_2$. Otherwise, the sum $\oplus$ consists of a concatenation of components.
\end{remark}

\section{\bf Contributions}
%

From now on, $\bar R$ will indicate the restriction of $R$ to systems made by strongly connected components only.
First, we need to adapt the definition of product between two DDS in terms of components and their period.

\begin{proposition}\label{lem:prodm}
	Consider a system composed by $m$ components of period $p$, namely $C^m_p$, multiplied by a system with $n$ components of period $q$, namely $C^n_q$, the result of the product operation depends only on the length of the periods of the components involved according to the following formula, with $m,n\in\N$ and $m,n \geq 1$
	\begin{equation} \label{produ}
	    C^m_p \odot C^n_q =
		C^{m \times n \times \gcd(p,q)}_{\lcm(p,q)}\enspace. 
	\end{equation}
\end{proposition}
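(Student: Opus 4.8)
The plan is to reduce the general statement to the special case $m=n=1$ and then analyse the product of two single cycles directly as a functional graph. First I would invoke distributivity of $\odot$ over $\oplus$, which is part of the commutative semiring structure of $F$ recalled above: since the vertex set of a product is the Cartesian product of the two vertex sets, and a disjoint union splits across a Cartesian product, one has $(G_1 \oplus G_2)\odot H = (G_1\odot H)\oplus(G_2 \odot H)$, and symmetrically in the second argument. Writing $C^m_p = \bigoplus_{i=1}^m C^1_p$ and $C^n_q = \bigoplus_{j=1}^n C^1_q$, distributivity expands $C^m_p \odot C^n_q$ into a sum of $m\times n$ copies of $C^1_p \odot C^1_q$. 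Hence it suffices to prove $C^1_p \odot C^1_q = C^{\gcd(p,q)}_{\lcm(p,q)}$, after which the multiplicities simply multiply and the claimed exponent $m\times n\times\gcd(p,q)$ follows.

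For the single-component case I would label the cycle of period $p$ by $\mathbb{Z}/p\mathbb{Z}$ with the edge $i\mapsto i+1 \bmod p$, and likewise the cycle of period $q$ by $\mathbb{Z}/q\mathbb{Z}$. By the definition of the product, the next-state map of $C^1_p\odot C^1_q$ sends $(i,j)$ to $(i+1 \bmod p,\, j+1 \bmod q)$; in particular every vertex of the product has both out-degree and in-degree exactly one, so the product is again a disjoint union of cycles, hence a well-defined element of $\bar R$. It then remains only to determine the common length of these cycles and how many there are.

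The core computation is the orbit analysis of the map $(i,j)\mapsto(i+1,j+1)$ on $\mathbb{Z}/p\mathbb{Z}\times\mathbb{Z}/q\mathbb{Z}$. Iterating it $t$ times gives $(i+t,\,j+t)$, which equals $(i,j)$ exactly when $t\equiv 0 \pmod p$ and $t\equiv 0 \pmod q$, that is when $\lcm(p,q)\mid t$; thus every orbit has length exactly $\lcm(p,q)$, independently of the starting vertex. Since the product has $pq$ vertices partitioned into cycles of this common length, the number of cycles is $pq/\lcm(p,q)=\gcd(p,q)$, which establishes $C^1_p\odot C^1_q = C^{\gcd(p,q)}_{\lcm(p,q)}$. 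Combined with the $m\times n$ multiplicity coming from distributivity, this yields Equation~\eqref{produ}. I expect the only genuine subtlety to be the bookkeeping in the first step, namely checking carefully that the disjoint union really commutes with the Cartesian product at the level of both vertices and edges so that distributivity holds for the concrete graph operations; the cycle-length count itself is an elementary $\lcm$/$\gcd$ identity and should present no difficulty.
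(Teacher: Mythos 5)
Your proposal is correct and follows the same overall strategy as the paper: reduce to the single-component case $C^1_p\odot C^1_q=C^{\gcd(p,q)}_{\lcm(p,q)}$ and then multiply the number of components by $m\times n$. Where you differ is in how that core case is handled. The paper splits into two cases ($\gcd(p,q)=1$ versus $\gcd(p,q)\neq 1$) and argues informally by displaying tables of how the two cyclic sequences align, concluding that the resulting cycle has length $\lcm(p,q)$ and that $\gcd(p,q)=pq/\lcm(p,q)$ components are generated. Your orbit analysis of $(i,j)\mapsto(i+1\bmod p,\,j+1\bmod q)$ on $\mathbb{Z}/p\mathbb{Z}\times\mathbb{Z}/q\mathbb{Z}$ handles both cases uniformly and more rigorously: every orbit has exact period $\lcm(p,q)$ since returning requires $p\mid t$ and $q\mid t$, so the $pq$ vertices partition into $pq/\lcm(p,q)=\gcd(p,q)$ cycles. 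You also make explicit the distributivity step that justifies the $m\times n$ multiplicity, which the paper only asserts in one sentence. So your version is a cleaner and more complete rendering of the same idea rather than a different route; there is no gap.
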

\begin{proof}
Given two discrete dynamical systems $\langle X,f \rangle$ and $\langle Y,g \rangle$, where the first system has only one component of period $p$ and the second has only one component of period $q$, let us prove that:
$$C^1_p \odot C^1_q = C^{\gcd(p,q)}_{\lcm(p,q)}$$
We know that a product operation corresponds to a Cartesian product between $X$ and $Y$ (Given two discrete dynamical systems $\langle X,f \rangle$ and $\langle Y,g\rangle$, their product is the dynamical system $\langle X \times Y,f \times g \rangle$ where $\forall (x,y) \in X \times Y, (f \times g)(x,y)=(f(x),g(y)))$. 
There are two possible cases:
	\begin{itemize}
		\setlength\itemsep{1.2em}
		\item $gcd(p,q)=1$.
			\begin{table}[h]
			\centering
				\begin{tabular}{|c|c|c|c|c|c|c|c|c|c|c|c|c}
					$x_1$ & $x_2$ & ... & $x_p$     & $x_1$ & $x_2$ & ... & $x_p$     & ... & $x_1$ & $x_2$ & ... & $x_p$ \\
					$y_1$ & $y_2$ & ... & $y_{q_1}$ & $y_q$ & $y_1$ & ... & $y_{q-2}$ & ... & ...   & ...   & ... & $y_q$
				\end{tabular}
			\end{table}
			In this case the larger period of the two is not able to represent the smallest cyclic behavior inside it, consequently it obtains a single period containing all the Cartesian product.
		\item $gcd(p,q) \neq 1$, with $p>q$.
			\begin{table}[h]
				\centering
				\begin{tabular}{|l|l|l|l|l|l|l|}
				$x_1$ & $x_2$ & ... & $x_i$ & $x_{i+1}$ & ... & $x_p$ \\
				$y_1$ & $y_2$ & ... & $y_q$ & $y_1$     & ... & $y_q$
				\end{tabular}
			\end{table}
			In this case the cycles of period $p$ and $q$ arrive at one point to be represented by a cycle of length $lcm(p,q)$ but this means that the elements of this cycle are only a subset of the Cartesian product. For this reason $gcd(p,q)={{p \cdot q}\over{lcm(p,q)}}$ components are generated.
	\end{itemize}	
In the case of $m$ or $n$ different from $1$, this means that each product operation is done for each of these components, so in general the result is duplicated $m \cdot n$ times.
\end{proof}
One can also simplify the parameter of a component. 
The following definition provides a formula to compact the notation of a DDS with $n$ identical components.
\begin{definition}\label{lem:nxcomponent}
    Consider a single component $C_p^m$,  then $\forall n,m, p \in\N\setminus\set{0}$ it holds
    \begin{equation}\label{eq-prod}
		C^{mn}_p = n \cdot C^{m}_p \enspace.
	\end{equation}
\end{definition}

Let us remind that each $X_{i}$ represents, in fact, a variable $x_{i}^{w_i}$. 
Therefore, it is necessary to know how we can retrieve the solutions for the original $x_{i}$. 
To do so, we will use the following lemma:

\begin{proposition}
\label{general_newt}
	Given a system composed by $m$ components of period $p_i\in\N$, with $p_i>0$ for all $i \in \{1,...,m\}$, let $g(p_1,p_2,...,p_m,k_1,k_2,...,k_m)$ be the gcd between the $p_i$ for which $k_i \not = 0$ and let $l(p_1,p_2,...,p_m,k_1,k_2,...,k_m)$ be the lcm between the $p_i$ for which $k_i \not = 0$. Consider a system $S\equiv C_{p_1}^1\oplus C_{p_2}^1\oplus...\oplus C^1_{p_m}$. Then,
	\[
	\left(S\right)^n=\bigoplus\limits_{i=1}^{m}C^{p_i^{n-1}}_{p_i} \oplus \bigoplus\limits_{\substack{k_1+k_2+...+k_m=n \\ 0 \leq k_1,k_2,...,k_m < n}}^{}\binom{n}{k_1,k_2,...,k_m}C^{g\cdot{\prod_{\substack{t=1 \\k_t \neq 0}}^{m}p_t^{k_t-1}}}_{l}
	\enspace.
	\]
\end{proposition}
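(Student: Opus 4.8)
The plan is to expand $(S)^n$ with the multinomial theorem in the commutative semiring $\bar R$ and then to evaluate every resulting product of single components by Proposition~\ref{lem:prodm}. Since $\odot$ distributes over $\oplus$ and both operations are associative and commutative, the multinomial theorem applies, the integer coefficients being read as repeated $\oplus$-sums (Definition~\ref{lem:nxcomponent}), so that
\[
(S)^n=\bigoplus_{\substack{k_1+\cdots+k_m=n\\ 0\le k_1,\dots,k_m\le n}}\binom{n}{k_1,\dots,k_m}\;\bigodot_{i=1}^{m}\bigl(C^1_{p_i}\bigr)^{k_i}.
\]
Everything then reduces to identifying, for each multi-index $(k_1,\dots,k_m)$, the system $\bigodot_{i}\bigl(C^1_{p_i}\bigr)^{k_i}$.

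First I would isolate the \emph{diagonal} multi-indices, where some $k_i=n$ and all other entries vanish. A one-line induction on $k$ from Proposition~\ref{lem:prodm} gives $\bigl(C^1_p\bigr)^k=C^{p^{k-1}}_p$, because $C^{p^{k-1}}_p\odot C^1_p=C^{p^{k-1}\gcd(p,p)}_{\lcm(p,p)}=C^{p^{k}}_p$. Each diagonal term has multinomial coefficient $1$, so these $m$ terms assemble into the first sum $\bigoplus_{i=1}^{m}C^{p_i^{n-1}}_{p_i}$. The remaining multi-indices are exactly those with $0\le k_i<n$ for every $i$ (a single nonzero entry would force it to equal $n$), which is precisely the index set of the second sum and always has at least two nonzero entries.

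Next I would evaluate such a generic term. Discarding the factors with $k_i=0$, which are copies of the multiplicative unit $C^1_1$, the term equals $\bigodot_{t:\,k_t\neq0}C^{p_t^{k_t-1}}_{p_t}$. Iterating Proposition~\ref{lem:prodm}, and using that $\lcm$ is associative and commutative, the period of the product is $l=\lcm\{p_t:k_t\neq0\}$. For the number of components I would count states: because $\odot$ realises a Cartesian product, state-set cardinalities multiply, so the system has $\prod_{t:\,k_t\neq0}p_t^{k_t}$ states, all on cycles of length $l$; hence it is made of $\bigl(\prod_{t:\,k_t\neq0}p_t^{k_t}\bigr)/l$ components.

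The crux, and the step I expect to be the main obstacle, is to bring this count to the stated closed form, i.e. to prove
\[
\frac{\prod_{t:\,k_t\neq0}p_t^{k_t}}{l}=g\cdot\prod_{t:\,k_t\neq0}p_t^{\,k_t-1},
\qquad\text{equivalently}\qquad
\prod_{t:\,k_t\neq0}p_t=g\cdot l,
\]
an identity relating the product, the gcd $g$ and the lcm $l$ of the periods that actually occur. For at most two distinct periods this is the classical $ab=\gcd(a,b)\,\lcm(a,b)$ and is immediate; for three or more it must be handled with care — one would argue prime by prime on the $p$-adic valuations — and here the literal identity need not hold (already $2\cdot4\cdot6=48\neq 24=\gcd(2,4,6)\,\lcm(2,4,6)$), so this is exactly the point where the precise hypotheses on the $p_t$ have to be pinned down. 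Once the count is secured, collecting the diagonal contributions, the generic contributions, and reading the multinomial coefficients as numbers of copies via Definition~\ref{lem:nxcomponent}, yields the two sums of the statement.
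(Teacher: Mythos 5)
Your skeleton is the same as the paper's: expand $(S)^n$ by the multinomial theorem, split off the diagonal multi-indices (one $k_i=n$), and evaluate each remaining term $\bigodot_{t}(C^1_{p_t})^{k_t}$. Where you diverge is in how you determine the number of components of that term: the paper simply cites Proposition~\ref{lem:prodm} and writes down $C^{g\cdot\prod p_t^{k_t-1}}_{l}$ in one step, whereas you count states ($\prod_{t:k_t\neq 0}p_t^{k_t}$ states, all lying on cycles of length $l=\lcm\{p_t:k_t\neq0\}$, hence $\prod_{t:k_t\neq0}p_t^{k_t}/l$ components) and then ask whether this matches the stated closed form, i.e.\ whether $\prod_{t:k_t\neq0}p_t=g\cdot l$.

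The obstacle you flag is real, and you should press it to its conclusion: it is not a hypothesis waiting to be pinned down but a counterexample to the proposition as stated. Proposition~\ref{lem:prodm} is a statement about \emph{binary} products; iterating it over three factors produces the telescoping quantity $\gcd(p_1,p_2)\cdot\gcd(\lcm(p_1,p_2),p_3)=\prod_t p_t/\lcm(p_1,p_2,p_3)$, not the simultaneous gcd $g$ used in the statement. Concretely, for $S=C^1_2\oplus C^1_4\oplus C^1_6$ and $n=3$, the term $(k_1,k_2,k_3)=(1,1,1)$ evaluates to $C^1_2\odot C^1_4\odot C^1_6=C^2_4\odot C^1_6=C^4_{12}$ (matching your count $48/12=4$), while the stated formula gives $C^{\gcd(2,4,6)}_{12}=C^2_{12}$; summing over all multi-indices, $(S)^3$ actually contains $C^{84}_{12}$ where the formula predicts $C^{72}_{12}$, and the state totals ($12^3=1728$ for the true expansion versus $1584$ for the formula) confirm the formula is the one at fault. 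The paper's own proof has exactly the gap you isolated --- it applies the two-factor product rule to an $m$-factor product without justification --- so your state-counting derivation is the correct route, and the exponent in the statement should read $\prod_{t:k_t\neq0}p_t^{k_t}/l$ (equivalently, $g$ should be replaced by $\prod_{t:k_t\neq0}p_t/l$); the two expressions agree only when at most two distinct periods carry a nonzero exponent, e.g.\ when $m\le 2$.
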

\begin{proof}\null\mbox{}\\
	Using the multinomial theorem one finds
	\[
	    (S)^n=(C_{p_1}^1\oplus C_{p_2}^1\oplus ...\oplus C^1_{p_m})^n=\bigoplus\limits_{k_1+k_2+...+k_m=n}^{}\binom{n}{k_1,k_2,...,k_m}\bigodot\limits_{t=1}^{m}(C^1_{p_t})^{k_t}=
	\]
	\begin{equation}\label{eq-diml2}
	=\bigoplus\limits_{i=1}^{m}(C^1_{p_i})^n\oplus \bigoplus\limits_{\substack{k_1+k_2+...+k_m=n \\ 0 \leq k_1,k_2,...,k_m < n}}^{}\binom{n}{k_1,k_2,...,k_m}\bigodot\limits_{t=1}^{m}(C^1_{p_t})^{k_t}
	\end{equation}
	The resulting Formula \ref{eq-diml2} is obtained by extrapolating the cases in which a $k_i = n$. Another transformation is possible according to Proposition \ref{lem:prodm}. 
	\[
	\bigoplus\limits_{i=1}^{m}(C^1_{p_i})^n\oplus \bigoplus\limits_{\substack{k_1+k_2+...+k_m=n \\ 0 \leq k_1,k_2,...,k_m < n}}^{}\binom{n}{k_1,k_2,...,k_m}\bigodot\limits_{t=1}^{m}(C^1_{p_t})^{k_t}=
	\]
	\[
	=\bigoplus\limits_{i=1}^{m}(C^1_{p_i})^n\oplus \bigoplus\limits_{\substack{k_1+k_2+...+k_m=n \\ 0 \leq k_1,k_2,...,k_m < n}}^{}\binom{n}{k_1,k_2,...,k_m}C^{g(p_1,p_2,...,p_m,k_1,k_2,...,k_m) \cdot \prod_{t=1}^{m} p_t^{k_t-1}}_{l(p_1,p_2,...,p_m,k_1,k_2,...,k_m)}=
	\]
	\[
	=\bigoplus\limits_{i=1}^{m}C^{p_i^{n-1}}_{p_i}\oplus \bigoplus\limits_{\substack{k_1+k_2+...+k_m=n \\ 0 \leq k_1,k_2,...,k_m < n}}^{}\binom{n}{k_1,k_2,...,k_m}C^{g(p_1,p_2,...,p_m,k_1,k_2,...,k_m) \cdot \prod_{t=1}^{m} p_t^{k_t-1}}_{l(p_1,p_2,...,p_m,k_1,k_2,...,k_m)}.
	\]
\end{proof}

For $k$ equal to $0$ we assume that $(S)^0$ is equal to $C^1_1$, the neutral element of the product operation.
Let us go back to Equation \ref{eq:problemSolvedOriginal} which is the problem that we want to solve. It can be rewritten as follows:
\begin{equation} 
	(\bigoplus\limits_{j=1}^{S_1} C_{p_{1j}}^{1} \odot X_1) \oplus(\bigoplus\limits_{j=1}^{S_2} C_{p_{2j}}^{1} \odot X_2) \oplus\ldots \oplus (\bigoplus\limits_{j=1}^{S_k} C_{p_{kj}}^{1} \odot X_k) = \bigoplus\limits_{j=1}^{m} C_{q_{j}}^{n_{j}}
	\label{eq:problemSolved}
\end{equation}
with $S_i$, the number of different components in the system $i$, $p_{ij}$ is the value of the period of the $j^{th}$ component in the system $i$. In the right term, there are $m$ different periods, where for the $j^{th}$ different period, $n_j$ is the number of components, and $q_j$ the value of the period.
However, Equation \eqref{eq:problemSolved} is still hard to solve. We can simplify it performing a \textbf{contraction step}
which consists in cutting Equation \eqref{eq:problemSolved} into two simpler equations: $(C_{p_{11}}^{1} \odot X_1) = W$, where $W \subseteq \bigoplus\limits_{i=1}^{m} C_{q_{i}}^{n_{i}}$ and $((C_1^1 \odot Y) = \bigoplus\limits_{i=1}^{m} C_{q_{i}}^{n_{i}} \setminus W)$ with $Y=(\bigoplus\limits_{i=2}^{S_1} C_{p_{1i}}^{1} \odot X_1) \oplus(\bigoplus\limits_{j=1}^{S_2} C_{p_{2j}}^{1} \odot X_2) \oplus\ldots \oplus (\bigoplus\limits_{j=1}^{S_k} C_{p_{kj}}^{1} \odot X_k)$.
By applying recursively a contraction step on all the partitions of $W$ and on the second equation obtained (\ie the one containing $Y$) one finds that, solving Equation~\eqref{eq:problemSolved} boils down to solving multiple times the following type of equation: 
\begin{equation} 
\label{eq:simple}
	C^1_p \odot X = C^n_q\enspace.
\end{equation}

If the variable $X$ presents a power different from one, it is possible use the Lemma~\ref{general_newt} in order to study the squared by the power. 

However, equations of the shape of Equation~\ref{eq:simple} will be numerous therefore an efficient practical algorithm able to enumerate all its solutions is needed. 
In fact, we can propose the following bounds to know how many times equations of the shape Equation~\ref{eq:simple} are solved with the following lemma:

\begin{proposition}
	Let us denote by $Z$ the number of times that we will solve equation of the shape Equation~\ref{eq:simple}, we have the following:
	$\prod_{i=1}^{m}\binom{n_i +\sum_{j=1}^{k}S_j-1}{\sum_{j=1}^{k}S_j-1} \cdot m \leq Z \leq \prod_{i=1}^{m}\binom{n_i +\sum_{j=1}^{k}S_j-1}{\sum_{j=1}^{k}S_j-1} \cdot m \cdot \sum_{j=1}^{k}S_j$.
\end{proposition}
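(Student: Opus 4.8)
The plan is to express $Z$ as a sum, over every leaf of the recursive contraction, of the number of equations of the shape Equation~\eqref{eq:simple} that the leaf spawns, and then to bound the two factors separately. Write $T=\sum_{j=1}^{k}S_j$ for the total number of monomial terms $C^1_{p_{ij}}\odot X_i$ on the left-hand side of Equation~\eqref{eq:problemSolved}. Each contraction step peels off one such term and assigns to it a sub-multiset $W$ of the components of the target $\bigoplus_{i=1}^{m}C^{n_i}_{q_i}$, leaving the remaining $T-1$ terms to cover the complement; iterating the recursion to its leaves therefore amounts to choosing, for each of the $T$ terms, which components of the target it has to reproduce. Hence the leaves of the recursion are exactly the ways of distributing the target's components among the $T$ terms.

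First I would count these distributions. For each of its $m$ distinct periods $q_i$, the target carries $n_i$ mutually identical components $C^1_{q_i}$, and distributing $n_i$ identical objects among the $T$ labelled terms is a stars-and-bars count equal to $\binom{n_i+T-1}{T-1}$. As the $m$ periods are distributed independently, the number of leaves is
\[
N=\prod_{i=1}^{m}\binom{n_i+T-1}{T-1}=\prod_{i=1}^{m}\binom{n_i+\sum_{j=1}^{k}S_j-1}{\sum_{j=1}^{k}S_j-1},
\]
which is precisely the product factor occurring in both bounds.

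Next I would bound, for a fixed leaf, the number of equations of the form Equation~\eqref{eq:simple} it generates. A term $C^1_p\odot X$ whose assigned portion $W$ mixes several periods is resolved period by period: by Proposition~\ref{lem:prodm} each component of $X$ yields a single period in the product, so solving $C^1_p\odot X=W$ splits into one equation $C^1_p\odot X'=C^{w}_{q_i}$ of the shape Equation~\eqref{eq:simple} for every distinct period $q_i$ present in $W$ (with $w$ its multiplicity). Therefore the number of simple equations produced by a leaf equals $\sum_{i=1}^{m}t_i$, where $t_i$ is the number of terms across which the $n_i$ components of period $q_i$ have been spread. Since $n_i\ge 1$, each component of period $q_i$ must land somewhere and there are only $T$ terms, whence $1\le t_i\le\min(n_i,T)\le T$; consequently this sum lies between $m$ and $m\cdot T=m\cdot\sum_{j=1}^{k}S_j$.

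Finally, summing the per-leaf count over the $N$ leaves yields $N\cdot m\le Z\le N\cdot m\cdot\sum_{j=1}^{k}S_j$, which is the announced inequality. The main obstacle is the bookkeeping of the third paragraph: one has to be sure that the recursion enumerates each distribution exactly once and that the simple equations are counted once per (term, distinct period) pair rather than once per component. Once this is settled, the elementary estimates $1\le t_i\le T$ (every target period is non-empty, and it can be split over at most $T$ terms) deliver both the lower and the upper bound with no further computation.
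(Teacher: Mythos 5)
Your proof is correct and follows essentially the same route as the paper's: the factor $\prod_{i=1}^{m}\binom{n_i+T-1}{T-1}$ (with $T=\sum_{j=1}^{k}S_j$) counts the leaves of the contraction recursion via stars and bars, and each leaf spawns between $m$ and $m\cdot T$ equations of the shape Equation~\eqref{eq:simple} because each of the $m$ target periods is spread over between $1$ and $T$ terms. Your direct bookkeeping with the quantities $t_i$ is a cleaner rendering of what the paper argues by contradiction, but the underlying decomposition and both bounds are the same.
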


The intuition is as follows: the contraction step is necessary to study all the possible ways to produce the right term with the components in the left part of the equation. Accordingly, it is necessary to understand the number of possible decompositions of the right term to discover the bounds for the number of the executions of the colored-tree method (a decomposition corresponds to assign a subset of the components of the right part to a product operation between a variable and a known component). For each period a Star and Bars decomposition is applied (we redirect the reader unfamiliar with the Star and Bars decomposition to \cite{SBars}).

\begin{proof}
	In general for a fixed $q_i$, the $n_i$ components are divided into $\sum_{j=1}^{k}S_j$ groups, in this case there are $\binom{n_i +\sum_{j=1}^{k}S_j-1}{\sum_{j=1}^{k}S_j-1}$ different ways for dividing the components. Therefore, we can rewrite the lemma as follows: $m \leq \frac{Z}{\prod_{i=1}^{m}\binom{n_i +\sum_{j=1}^{k}S_j-1}{\sum_{j=1}^{k}S_j-1}} \leq m \cdot \sum_{j=1}^{k}S_j$.
	And now, toward a contradiction for the lower bound. Let us assume that we can solve less than $m$ equations. This implies that we solve less equations than the number of different periods on the right term. Contradiction, we need at least all of them (not necessary all their combinations) to determine the solution of the equation.
	And now, toward a contradiction again to prove the upper-bound.
	Firstly, we know for all the components in the right term there are $\prod_{i=1}^{m}\binom{n_i +\sum_{j=1}^{k}S_j-1}{\sum_{j=1}^{k}S_j-1}$ feasible divisions.
	Now, let us assume that in the worst case, for each coefficient the product operation must produce more than one components of each possible periods in the right term. This is a contradiction from the definition of the equation, where all the components must all have a different period. The second possibility to go beyond this bounds is that it would exists more $S_{i}$ than the one present in the equation, again a contradiction by definition of the equation.
	Therefore, we know that we have: $\prod_{i=1}^{m}\binom{n_i +\sum_{j=1}^{k}S_j-1}{\sum_{j=1}^{k}S_j-1} \cdot m \leq Z \leq \prod_{i=1}^{m}\binom{n_i +\sum_{j=1}^{k}S_j-1}{\sum_{j=1}^{k}S_j-1} \cdot m \cdot \sum_{j=1}^{k}S_j$, for $Z$ being the number of times that we will solved equation of the shape Equation~\ref{eq:simple}.
\end{proof}
\section{\bf The Colored-Table Method}

First of all, let us formally define the problem and analyze its complexity.

\begin{definition}[\DSECP]
The (finite) Discrete Dynamical Systems Solving Equations on Components Problem is a problem which takes in input
$C^1_p$ and $C^n_q$ and outputs the list of all the solutions $X$ to the equation $C^1_p \odot X = C^n_q$.
\end{definition}

Solving \DSECP\ is hard but still tractable. Indeed, the following lemma classifies our problem in \EnumP.
Recall that \EnumP\ is the complexity class of enumeration problems for which a solution can be verified in polynomial time~\cite{phdStrozecki}. It can be seen as the enumeration counterpart of the \NP\ complexity class.

\begin{lemma}
\DSECP\ is in \EnumP.
\end{lemma}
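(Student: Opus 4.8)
The plan is to exhibit $\DSECP$ as the enumeration problem attached to a relation that is both \emph{polynomially balanced} and \emph{decidable in polynomial time}, which is exactly what membership in $\EnumP$ demands. Concretely, I would introduce the relation $R$ pairing an input $(C^1_p, C^n_q)$ with a candidate output $X$ whenever $C^1_p \odot X = C^n_q$, and then establish the two conditions separately: first, that every $X$ with $((C^1_p,C^n_q),X)\in R$ has a description whose size is polynomially bounded in the size of the input; second, that testing membership $((C^1_p,C^n_q),X)\in R$ can be performed in polynomial time.

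For the size bound I would exploit that $\odot$ is a Cartesian product at the level of states: the system $C^1_p \odot X$ has exactly $p\cdot|X|$ states, where $|X|$ is the number of states of $X$. Since this must coincide with the $n\cdot q$ states of $C^n_q$, every solution satisfies $|X| = nq/p$ (so $p \mid nq$ is in fact a necessary condition for solvability). Consequently the number of states of any candidate solution, hence the size of its canonical description as a disjoint union of cycles, is bounded by the size of $C^n_q$ itself, which yields polynomial balance.

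For the verification step I would deliberately avoid building the (potentially large) product graph and instead reason directly on the period--multiplicity encoding. Writing $X$ canonically as $\bigoplus_j C^{m_j}_{r_j}$ with pairwise distinct periods $r_j$, distributivity of $\odot$ over $\oplus$ together with Proposition~\ref{lem:prodm} gives $C^1_p \odot X = \bigoplus_j C^{m_j\cdot\gcd(p,r_j)}_{\lcm(p,r_j)}$. Because two systems built from strongly connected components are equal precisely when they share the same multiset of cycle lengths, the test collapses to checking that (i) $\lcm(p,r_j)=q$ holds for every $j$, and (ii) after merging components of equal period one has $\sum_j m_j\cdot\gcd(p,r_j)=n$. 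Each $\gcd$ and $\lcm$ is computable in polynomial time, so the whole check runs in polynomial time.

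The step I expect to require the most care is the reduction of the equality test to a comparison of period--multiplicity profiles: one must justify that, for the systems arising here (finite disjoint unions of simple cycles), isomorphism of dynamics graphs is equivalent to equality of their cycle-length multisets, so that no costly graph-isomorphism routine is invoked and verification remains polynomial. Once this is in place, the $\gcd$/$\lcm$ arithmetic and the state-counting argument are routine, and combining the polynomial balance of $R$ with the polynomial-time decidability of $R$ establishes $\DSECP\in\EnumP$.
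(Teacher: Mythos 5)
Your proposal is correct and follows essentially the same route as the paper: membership in \EnumP{} is reduced to polynomial-time verification of a candidate solution via the product formula of Proposition~\ref{lem:prodm}. You simply spell out more of the details (the polynomial balance of the solution relation and the reduction of graph equality to comparing multisets of cycle lengths) that the paper's one-line proof leaves implicit.
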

\begin{proof}
One just needs to be able to check if a given value is a solution in polynomial time. This can be done in linear time using 
Lemma~\ref{lem:prodm}.
\end{proof}

\paragraph{Notation.} For any $n, p, q\in\N^{\star}$, let $T_{p,q}^n$ denote the set of solutions of Equation~\eqref{eq:simple}
and $S_{p,q}^n$ the set of solutions returned by the \CT method.
\medskip



The \CT method is pretty involved, we prefer start to illustrate it by an example. 
\begin{example}
	Consider the following equation $C_6^1 \odot X = C_6^6$.
	The algorithm consists in two distinct phases: tree building and solution aggregation.
	In the first phase, the algorithm enumerates all the divisors $\mathcal{D}$ of $6$ \ie $\set{6, 3, 2, 1}$. It then applies a making-change decomposition algorithm (MCDA)~\cite{AdamaszekA10} in which the total sum is $6$ and the allowed set of coins is $\mathcal{D'}=\mathcal{D}\setminus\set{6}$. MCDA decomposes $6$ as $3 + 3$ (which is an optimal decomposition).
	MCDA is then applied recursively (always using $\mathcal{D} \setminus\set{i}$ as the set of coins to decompose $i$). 
	We obtain $(6= 3+3)$, $(3= 2+1)$ and $(2= 1+1)$ as reported in Table~\ref{table:ExColoredTable}.
	\begin{table}[!b]
		\centering
		\caption{Final data-structure storing all the decompositions, each solution for each value and at each step, the set of all solutions for a given value.}\label{table:ExColoredTable}
		\scalebox{0.7}{
			\begin{tabular}{cccc}
				\toprule
				\textbf{Node} & \textbf{Splits} & \textbf{Node solution}   & \textbf{Subtree solutions set} 		  \\ \midrule 
				6    		  & [3,3][2,2,2]   	& $C_6^1$          			& 
				\begin{tabular}{@{}c@{}} 
					$\{ C^1_6,C^2_3,C^1_1 \oplus C^1_2 \oplus C^1_3,C^1_3 \oplus C^3_1,$\\
					$C^1_2 \oplus C^4_1,C^6_1,C^3_2,C^2_1 \oplus C^2_2 \}$ 
				\end{tabular}  				  \\ \rowcolor{gray!40}
				3    		  & [2,1]           & $C_3^1$          			& $\set{ C^1_3,C^1_1 \oplus C^1_2,C^3_1}$ \\ 
				2    		  & [1,1]          	& $C_2^1$          			& $\set{ C^2_1,C^1_2}$ 			  \\ \rowcolor{gray!40}
				1    		  & $\emptyset$     & $C_1^1$          			& $\set{ C_1^1}$ 				  \\
				\bottomrule
		\end{tabular}}
	\end{table}
	At this point, a check is performed to ensure that all possible ways of decomposing $6$ using $\mathcal{D'}$ are present in the tree. 
	In our case, we already have $[3,3]$ found by the first run of MCDA. We also found: $[3,2,1]$, $[2,2,1,1]$, $[1,1,2,1,1]$, $[1,1,1,1,1,1]$ by the recursive application of MCDA. By performing the check, we discover that the decomposition of $6$ as $[2,2,2]$ is not represented in the current tree. For this reason, $[2,2,2]$ is added to the set of decompositions of $6$ as illustrated in Figure~\ref{fig:treetable}, it is assigned a new color and a recursive application of MCDA is started on the newly added nodes. A new check ensures that all decompositions are
	present. This ends the building phase. The resulting tree is reported in Figure~\ref{fig:treetable}.
	\begin{figure}[!t]
		\centering
		\begin{tikzpicture}[scale=.45,-latex,auto, semithick, font=\footnotesize,
		state/.style ={circle,draw,minimum width=4mm},
		bleu/.style={fill=bleu!40},
		vert/.style={fill=vert!40}
		]
		\node[state] (a) at (0,0) {$6$};
		\node[state,bleu] (b) at (-7,-2) {$3$};
		\node[state,bleu] (c) at (-3,-2) {$3$};
		\node[state,vert] (d) at (0,-2) {$2$};
		\node[state,vert] (e) at (4,-2) {$2$};
		\node[state,vert] (f) at (7,-2) {$2$};
		
		\node[state,bleu] (g) at (-6,-4) {$1$};
		\node[state,bleu] (h) at (-8,-4) {$2$};
		\node[state,bleu] (i) at (-4,-4) {$1$};
		\node[state,bleu] (l) at (-2,-4) {$2$};
		\node[state,vert] (m) at (0,-4) {$1$};
		\node[state,vert] (n) at (2,-4) {$1$};
		\node[state,vert] (o) at (4,-4) {$1$};
		\node[state,vert] (p) at (6,-4) {$1$};
		\node[state,vert] (q) at (8,-4) {$1$};
		\node[state,vert] (r) at (10,-4) {$1$};
		
		\node[state,bleu] (s) at (-9,-6) {$1$};
		\node[state,bleu] (t) at (-7,-6) {$1$};
		\node[state,bleu] (u) at (-3,-6) {$1$};
		\node[state,bleu] (v) at (-1,-6) {$1$};
		
		\draw[->] (a) to (b);
		\draw[->] (a) to (c);
		\draw[->] (a) to (d);
		\draw[->] (a) to (e);
		\draw[->] (a) to (f);
		\draw[->] (b) to (g);
		\draw[->] (b) to (h);
		\draw[->] (c) to (i);
		\draw[->] (c) to (l);
		\draw[->] (d) to (m);
		\draw[->] (d) to (n);
		\draw[->] (e) to (o);
		\draw[->] (e) to (p);
		\draw[->] (f) to (q);
		\draw[->] (f) to (r);
		\draw[->] (h) to (s);
		\draw[->] (h) to (t);
		\draw[->] (l) to (u);
		\draw[->] (l) to (v);
		\end{tikzpicture} 
		\caption{The colored tree for the equation $C^1_6 \odot X=C^6_6$ after the completeness check.}
		\label{fig:treetable}
	\end{figure}
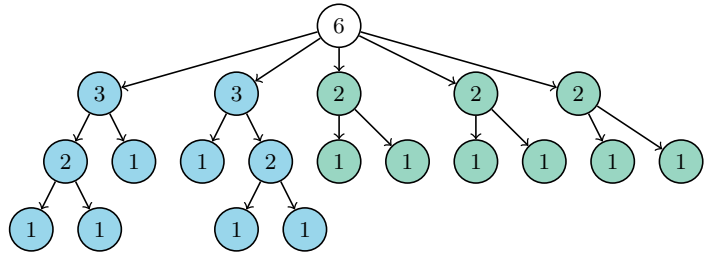
	
	After this first phase of construction of the tree, the aggregation of solutions starts. Remark that each node $m$ represents the equation $C^1_p \odot X =C^m_q$ that we call the \textbf{node equation}. The single component solution is called the \textbf{node solution} and it is obtained thanks to Lemma~\ref{lem:prodm}, $C^1_{{\frac{q}{p}} \times m}$ whenever a \textbf{feasible solution} exists \ie if $\gcd(p,{\frac{q}{p}} \times m)=m$ and $\lcm(p,{\frac{q}{p}} \times m)=q$. For example, for $m=3$ one finds $x=C_3^1$.
	To find all the solutions for the current node one must also take the Cartesian product of the solutions sets in the subtrees of the same
	color and then the union of the solution sets of nodes of different colors (different splits). All the solutions can be found in Table \ref{table:ExColoredTable}.
\end{example}

\begin{example}

Consider the equation $C_2^1 \odot X = C_4^5$.
In the first phase, the algorithm enumerates all the divisors $\mathcal{D}$ of $4$ \ie $\set{4, 2, 1}$. 
It then applies a making-change decomposition algorithm (MCDA)~\cite{AdamaszekA10}. MCDA decomposes $5$ as $4 + 1$ (which is an optimal decomposition).
MCDA is then applied recursively always using $\mathcal{D} \setminus\set{i}$ as the set of coins to decompose $i$. 
We obtain $(5= 4+1)$, $(4= 2+2)$ and $(2= 1+1)$ as reported in Table~\ref{table:ExColoredTablenosol}.
\begin{table}[!ht]
\centering
\scalebox{1.0}{
\begin{tabular}{cccc}
\toprule
\textbf{Node} & \textbf{Splits} & \textbf{Node solution}   & \textbf{Subtree solutions set} 		  \\ \midrule 
5    		  & [4,1]   	& $\{\}$          			& 
																	$\{\} $ 		  \\ \rowcolor{gray!40}
4    		  & [2,2]           & $\{\}$          			& $\{ C^2_4 \}$ \\ 
2    		  & [1,1]          	& $C_4^1$          			& $\{ C^1_4\}$ 			  \\ \rowcolor{gray!40}
1    		  & $\emptyset$     & $\{\}$          			& $\{\}$ 				  \\
\bottomrule
\end{tabular}}

\caption{\label{table:ExColoredTablenosol}Final data-structure storing all the decomposition, each solution for each value and at each step, the set of all solutions for a given value.}
\end{table}
At this point, a check is performed to ensure that all possible ways of decomposing $5$ using $\mathcal{D} \setminus\set{i}$ as the set of coins to decompose $i$.
In our case, we already have $[4,1]$ found by the first run of MCDA. We also found: $[2,2,1]$, $[2,1,1,1]$, $[1,1,1,1,1]$ by the recursive application of MCDA. 
By performing the check, we discover that all the possible decompositions of $5$ are represented in the current tree. 
This ends the building phase. 
The resulting tree is reported in Figure~\ref{fig:treetablenosol}.
\begin{figure}[!ht]
	\centering
	 \begin{tikzpicture}[scale=.50,-latex,auto, semithick , state/.style ={circle,draw,minimum width=0.5cm}]
	\node[state] (a) at (0,0) {$5$};
	\node[state,fill=bleu!40] (c) at (-2,-2) {$4$};
	\node[state,fill=bleu!40] (e) at (2,-2) {$1$};
	
	\node[state,fill=bleu!40] (i) at (-5,-4) {$2$};
	\node[state,fill=bleu!40] (l) at (1,-4) {$2$};
	
	\node[state,fill=bleu!40] (u) at (-1,-6) {$1$};
	\node[state,fill=bleu!40] (v) at (3,-6) {$1$};	
	\node[state,fill=bleu!40] (s) at (-3,-6) {$1$};
	\node[state,fill=bleu!40] (r) at (-7,-6) {$1$};
	
	\draw[->] (a) to[] (c);
	\draw[->] (a) to[] (e);
	\draw[->] (c) to[] (i);
	\draw[->] (c) to[] (l);
	\draw[->] (l) to[] (u);
	\draw[->] (l) to[] (v);
	\draw[->] (i) to[] (s);
	\draw[->] (i) to[] (r);
	\end{tikzpicture} 
	\caption{The tree represented in the table for $C^1_2x=C^5_4$, after the check of completeness.}
	\label{fig:treetablenosol}
\end{figure}
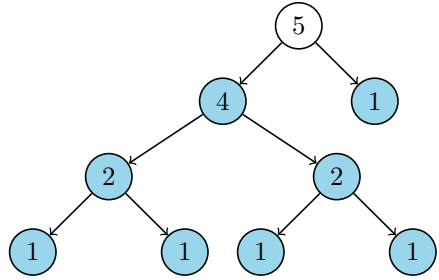
After this first phase of construction of the tree, the aggregation of solutions starts. In this case the tree presents only one color. Remark that if in the cartesian product a empty set is involved, the result of the operation is the empty set. For example, for $m=2$ ,
one has that the node solution is $C_4^1$. From the subtrees of the node one finds a empty set, but with the union of the solution of the node, the subtree solutions set for $m=2$ is $\set{C_4^1}$.
Moreover, the final solution set for the node $5$ is the empty set, in fact in the Cartesian product $m=1$ is involved (empty set). In this case the method return a empty set of solutions, that represents the impossibility of the equation.
\end{example}

\begin{example}

Consider the equation $C_2^1 \odot X = C_6^{12}$.
In the first phase, the algorithm enumerates all the divisors $\mathcal{D}$ of $6$ \ie $\set{6,3, 2, 1}$. 
It then applies a making-change decomposition algorithm (MCDA)~\cite{AdamaszekA10}. MCDA decomposes $12$ as $6 + 6$ (which is an optimal decomposition).
MCDA is then applied recursively always using $\mathcal{D} \setminus\set{i}$ as the set of coins to decompose $i$. 
We obtain $(12= 6+6)$, $(6= 3+3)$, $(3= 2+1)$ and $(2= 1+1)$ as reported in Table~\ref{table:ExColoredTablemoresol}.
\begin{table}[h!]
\centering
\scalebox{1.0}{
\begin{tabular}{cccc}
\toprule
\textbf{Node} & \textbf{Splits} & \textbf{Node solution}   & \textbf{Subtree solutions set} 		  \\ \midrule 
12    		  & [6,6]   	& $\{\}$          			& 
																	\begin{tabular}{@{}c@{}} 
																	$\{ C^4_3 \oplus C^4_6,C^{12}_3,C^6_6,C^6_3 \oplus C^3_6,$\\
																	$C^2_6 \oplus C^8_3,C^2_3 \oplus C^5_6,C^1_6 \oplus C^{10}_3\}$ 
																\end{tabular}  				  \\ \rowcolor{gray!40}
6    		  &[3,3] [2,2,2]           & $\{\}$          			& $\{ C^6_3 , C^2_6 \oplus C^2_3,C^4_3 \oplus C^1_6,C^3_6\}$ \\ 
3    		  & [2,1]           & $\{\}$          			& $\{ C^3_3,C_6^1 \oplus C_3^1 \}$ \\ \rowcolor{gray!40}
2    		  & [1,1]          	& $C_6^1$          			& $\{ C^1_6,C_3^2 \}$ 			  \\ 
1    		  & $\emptyset$     & $C_3^1$          			& $\{C_3^1\}$ 				  \\ \rowcolor{gray!40}
\bottomrule
\end{tabular}
}
\caption{\label{table:ExColoredTablemoresol}Final data-structure storing all the decomposition, each solution for each value and at each step, the set of all solutions for a given value.}
\end{table}

At this point, a check is performed to ensure that all possible ways of decomposing $12$ using $\mathcal{D'}$ is present in the tree. 
In our case, the decomposition of $6$ in $[2,2,2]$ is added in "each occurrence" of $6$. This ends the building phase. The resulting tree is reported in Figure~\ref{fig:treetablemoresol}.

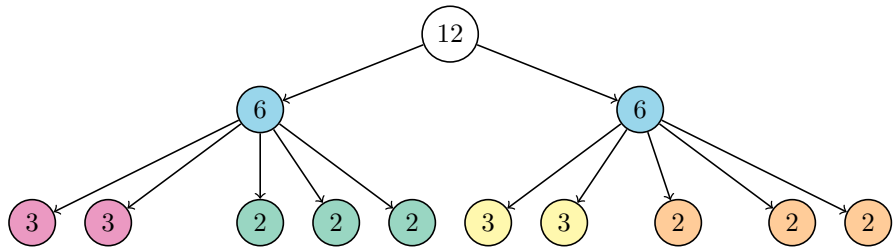
\begin{figure}[!ht]
	\centering
	 \begin{tikzpicture}[scale=.50,-latex,auto, semithick , state/.style ={circle,draw,minimum width=0.5cm}]
	\node[state] (a) at (0,0) {$12$};
	\node[state,fill=bleu!40] (b) at (-5,-2) {$6$};
	\node[state,fill=bleu!40] (c) at (5,-2) {$6$};
	
	\node[state,fill=rose!40] (d) at (-11,-5) {$3$};
	\node[state,fill=rose!40] (e) at (-9,-5) {$3$};
\node[state,fill=vert!40] (f) at (-5,-5) {$2$};
	\node[state,fill=vert!40] (g) at (-3,-5) {$2$};
\node[state,fill=vert!40] (h) at (-1,-5) {$2$};
	\node[state,fill=yellow!40] (i) at (1,-5) {$3$};
\node[state,fill=yellow!40] (l) at (3,-5) {$3$};
	\node[state,fill=orange!40] (m) at (6,-5) {$2$};
\node[state,fill=orange!40] (n) at (9,-5) {$2$};
	\node[state,fill=orange!40] (o) at (11,-5) {$2$};

	
	\draw[->] (a) to[] (c);
\draw[->] (a) to[] (b);
\draw[->] (b) to[] (d);
\draw[->] (b) to[] (e);
\draw[->] (b) to[] (f);
\draw[->] (b) to[] (g);
\draw[->] (b) to[] (h);
\draw[->] (c) to[] (i);
\draw[->] (c) to[] (l);
\draw[->] (c) to[] (m);
\draw[->] (c) to[] (n);
\draw[->] (c) to[] (o);
	\end{tikzpicture} 
	\caption{The first two levels of the tree represented in the table for $C^1_2x=C^{12}_6$, after the check of completeness.}
	\label{fig:treetablemoresol}
\end{figure}
After this first phase of construction of the tree, the aggregation of solutions starts. To find the solutions for the current node one must also take the Cartesian product of the solutions sets in the subtrees of the same
color and then the union of the solution sets of nodes of different colors (different splits). For example, for $m=12$ (\ie the root node), the cartesian product between $6$ and $6$ is computed, but for $m=6$ (in each occurrence) two cartesian operations and a union are necessary.
Therefore, the final solution set for the node $12$ is $\set{ C^4_3 \oplus C^4_6,C^{12}_3,C^6_6,C^6_3 \oplus C^3_6,C^2_6 \oplus C^8_3,C^2_3 \oplus C^5_6,C^1_6 \oplus C^{10}_3}$.
\end{example}

Although we can describe our algorithm with a pseudocode, and then we can sketch some proofs about its soundness, completeness and termination.
\begin{lstlisting}[caption=\textbf{Colored-Tree} - Complete algorithm for the enumeration problem.,
  label=coloredtable,
  escapechar=^]
procedure Colored-Tree(p, n, q):
	// input 'p,q,n': the parameters of the equation
	// enumerate all the solutions of the equation
	node,splits,nodeSolution,SubTreeSolutions=[]
	D=divisors(q) 
	node.add(n,1)
	for i in node.length do
		if (node[i]!=1) then
			splits[i]=MCDA(node[i],D \ node[i])
			generateNewNodes(splits[i])    
			SubTreeSolutions[i].add(nodeSolutions[i])
		end
	end
	checkRepresented()
	for i in node.length do
		nodeSolution[i]=computeSingleSolution(node[i])  
	end
	IncreaseOrder()
	for i in node.length do
		if (node[i]!=1) then
			solutionsSplits=[]
			for j in splits[i] do
				solutionsSplits.add(cartesian(splits[i][j]))
			end
			SubTreeSolutions[i].add(union(solutionsSplits))
		end
	end 
	return SubTreeSolutions[node.length]
\end{lstlisting}
The Lisiting \ref{coloredtable} presents the procedure using some particular functions:
\begin{itemize}
\item \textbf{generateNewNodes} adds the elements of the split, the node necessary in order to decompose but not yet represented as nodes in the nodes set.
\item \textbf{MCDA} computes the optimal solutions of the making-change problem for a node value and a set of coins.
\item \textbf{computeSingleSolution} returns the node solution for a node equation represented with a node.
\item \textbf{checkRepresented} check if all the possible decomositions of the root are represented, otherwise add the corrisponding sub-tree.
\item \textbf{IncreaseOrder} permutes the row of the table in the increasing order according to the value of the nodes.
\end{itemize}
Now we can sketch some proofs about its soundness, completeness and termination.
\begin{proposition}[Soundness]
	For all $n,p,q\in\N^{\star},\;S_{p,q}^n\subseteq T_{p,q}^n$.
\end{proposition}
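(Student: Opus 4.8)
The plan is to establish, by strong induction on the value $m$ labelling a node of the colored tree, the invariant that \emph{every} system in the subtree solution set of that node is a solution of the corresponding node equation $C^1_p \odot X = C^m_q$. Since MCDA always decomposes a value into strictly smaller summands, every child carries a strictly smaller label than its parent, so the induction is well founded; and because the root is labelled $n$ and its subtree solution set is exactly $S^n_{p,q}$, instantiating the invariant at the root yields precisely $S^n_{p,q}\subseteq T^n_{p,q}$. I would stress at the outset that soundness does not depend on the completeness check of the building phase at all: it only uses that each split recorded in the tree is a genuine additive decomposition of the node value.

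A node's subtree solution set is assembled from two kinds of contributions, and I would check each is sound. The first is the \emph{node solution} $C^1_{(q/p)m}$, which is retained only when the feasibility test $\gcd(p,(q/p)m)=m$ and $\lcm(p,(q/p)m)=q$ succeeds. In that case Proposition~\ref{lem:prodm} gives directly
\[
C^1_p \odot C^1_{(q/p)m} = C^{\gcd(p,(q/p)m)}_{\lcm(p,(q/p)m)} = C^m_q,
\]
so the node solution genuinely solves the node equation, and when the test fails nothing is contributed. This also settles the base case $m=1$, where no proper split exists and the node solution is the only candidate.

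The second contribution comes from the splits (the colors). For a split $m = m_1 + \cdots + m_k$, the aggregation forms every $X_1 \oplus \cdots \oplus X_k$ with each $X_i$ ranging over the subtree solution set of the child labelled $m_i$. By the induction hypothesis each such $X_i$ satisfies $C^1_p \odot X_i = C^{m_i}_q$. Using distributivity of $\odot$ over $\oplus$ in the semiring $\bar R$ together with the fact that the split is a genuine decomposition of $m$, this yields
\[
C^1_p \odot (X_1 \oplus \cdots \oplus X_k) = \bigoplus_{i=1}^{k}\bigl(C^1_p \odot X_i\bigr) = \bigoplus_{i=1}^{k} C^{m_i}_q = C^{m_1+\cdots+m_k}_q = C^m_q,
\]
so the combined system solves the node equation, and taking the union over all colors preserves this property, completing the inductive step.

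I do not expect a genuine obstacle here, since soundness is the easy direction: it reduces entirely to distributivity and to the product formula of Proposition~\ref{lem:prodm}, with only the induction bookkeeping to organise. The two points deserving care rather than difficulty are, first, verifying that the feasibility test is \emph{exactly} the condition under which the single-component product equals $C^m_q$, so that no infeasible node solution can leak into the output; and second, confirming that empty child solution sets are handled correctly, since a Cartesian product with an empty factor is empty and thus contributes no spurious system — this is what keeps $S^n_{p,q}\subseteq T^n_{p,q}$ valid even when a sub-equation is unsolvable.
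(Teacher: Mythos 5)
Your proof is correct and follows essentially the same route as the paper's: an induction over the colored tree (the paper inducts on depth, you on the node value, which is equivalent since children carry strictly smaller labels), with Proposition~\ref{lem:prodm} justifying the retained node solutions and the split contributions handled by combining child solutions via Cartesian products and unions. If anything, your write-up is tighter than the paper's: you explicitly supply the distributivity step $C^1_p\odot(X_1\oplus\cdots\oplus X_k)=\bigoplus_{i} C^1_p\odot X_i$ that the paper leaves implicit, and you keep the argument pointed at soundness throughout, whereas the paper's own text repeatedly speaks of ``obtaining all the solutions,'' which is really the completeness direction.
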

\begin{proof}
Let us prove the soundness by induction on the depth of the tree from leaves to root.
\textit{Induction base}: if there is only one step, we know by Lemma~\ref{lem:prodm}, that a solution found is feasible iff $gcd(p,{\frac{q}{p}} \times m)=m$ and $lcm(p,{\frac{q}{p}} \times m)=q$, and because there is only one leaf in the base, we therefore, obtain all the solutions.
\textit{Induction hypothesis}: let us assume that we have all the possible solutions at a depth $n$ and let us show that we can obtain all the solutions at a depth $n+1$.
\textit{Induction step}: It is easy to see that a solution exists if and only if it comes from a decomposition.
Thus, by performing a Cartesian product between the set of solutions at depth $n$ (which is true by IH) and the node solution (which is true by Induction base, since the node can be seen as a leaf), we know that we will obtain all the solution coming from the possible decomposition in the sub-tree.
If a solution is coming from another sub-tree, since we perform an exhaustive check where we assign a different color to the other sub-tree, we know again, by IH and because we are taking the union of all the possible solutions, that we have all the possible solutions at a depth $n+1$.
\end{proof}

\begin{proposition}[Completeness]
For all $n,p,q\in\N^{\star},\;T_{p,q}^n\subseteq S_{p,q}^n$.
\end{proposition}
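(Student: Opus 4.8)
The plan is to first replace the equation by a purely combinatorial description of its solution set, and then run the same leaves-to-root induction as in the soundness proof, only establishing the reverse inclusion. By Proposition~\ref{lem:prodm} we have $C^1_p\odot C^1_r=C^{\gcd(p,r)}_{\lcm(p,r)}$, and since the sum is a disjoint union (nothing cancels) while $C^n_q$ contains only components of period $q$, every single component $C^1_{r}$ occurring in a solution $X$ must satisfy $\lcm(p,r)=q$; such a component then contributes exactly $g:=\gcd(p,r)=pr/q$ components of period $q$. Expanding $X$ into a multiset of single components $\bigoplus_k C^1_{r_k}$, this shows that $X\in T^n_{p,q}$ if and only if each $\lcm(p,r_k)=q$ and $\sum_k\gcd(p,r_k)=n$. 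Since $g\mapsto r=qg/p$ is injective and the test $\gcd(p,r)=g,\ \lcm(p,r)=q$ is precisely the feasibility test used for the node solution, I would record that the solutions of $C^1_p\odot X=C^v_q$ are in bijection with the multiset partitions of $v$ into \emph{feasible contribution values}, i.e. divisors $g$ of $q$ passing that test.

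With this dictionary in hand, I would prove $T^n_{p,q}\subseteq S^n_{p,q}$ by strong induction on the node value $v$, writing $\mathrm{Sol}(v)$ for the subtree solution set computed at a node of value $v$. The aggregation rule reads $\mathrm{Sol}(v)=\mathrm{NodeSol}(v)\cup\bigcup_{(c_1,\dots,c_s)}\bigl(\mathrm{Sol}(c_1)\oplus\cdots\oplus\mathrm{Sol}(c_s)\bigr)$, the union ranging over the colors (top-level splits) present at the node. Take $X\in T^v_{p,q}$ and read off its contribution partition $g_1,\dots,g_L$. If $L=1$ then $X$ is the single-component solution $C^1_{qv/p}$ and equals $\mathrm{NodeSol}(v)$, which is feasible precisely because $X$ solves the node equation. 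If $L\ge 2$ then each $g_i=\gcd(p,r_i)$ divides $q$ and is strictly smaller than $v$, hence is a legal coin from $\mathcal{D}\setminus\{v\}$ (where $\mathcal{D}$ is the set of divisors of $q$), and each single component $C^1_{r_k}$ lies in $T^{g_k}_{p,q}=\mathrm{Sol}(g_k)$ by the induction hypothesis.

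The crux is to show that such an $X$ is in fact captured by one of the colored Cartesian products, i.e. that the partition $(g_1,\dots,g_L)$ is a refinement of some color \emph{present} at $v$. This is exactly the invariant that \textbf{checkRepresented} is designed to enforce: after the check, the colors at a node are such that every decomposition of its value into $\mathcal{D}\setminus\{v\}$ can be recovered by further decomposing the parts of some color, which is what the recursive subtrees together with the Cartesian product realize. Granting this invariant, I would pick a color $(c_1,\dots,c_s)$ refined by $(g_1,\dots,g_L)$, group the components $C^1_{r_k}$ so that the $i$-th group sums (in contributions) to $c_i$, and thereby write $X=X_1\oplus\cdots\oplus X_s$ with $X_i\in T^{c_i}_{p,q}=\mathrm{Sol}(c_i)$ by the induction hypothesis; hence $X\in\mathrm{Sol}(c_1)\oplus\cdots\oplus\mathrm{Sol}(c_s)\subseteq\mathrm{Sol}(v)$. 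Taking $v=n$ closes the induction.

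The hard part is this last invariant; everything else is bookkeeping on top of Proposition~\ref{lem:prodm}. One must argue rigorously that a single MCDA decomposition, its recursive applications, and the explicit repair step together leave no partition of $n$ outside the refinement-closure of the colors. The examples, where $[2,2,2]$ must be added alongside $[3,3]$ for $C^1_6\odot X=C^6_6$, show that the recursion alone is insufficient, so the correctness of \textbf{checkRepresented} is doing the real work and should be isolated as its own lemma, with termination following from the strict decrease of node values. Two minor points also deserve a line each: that $g\mapsto C^1_{qg/p}$ and $\oplus$ on multisets are injective, so the bijection of the first step is genuine and no solution is silently merged with another; and the degenerate case $p\nmid q$, where no feasible period exists, so $T^n_{p,q}=\emptyset$ and the inclusion holds vacuously.
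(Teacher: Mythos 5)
Your argument follows the same skeleton as the paper's own proof---every solution of $C^1_p\odot X=C^n_q$ corresponds to a decomposition of $n$ into feasible parts, and the exhaustiveness of the decompositions stored in the tree (guaranteed by \textbf{checkRepresented}) implies every solution is produced by the aggregation phase---but you execute it much more carefully. The paper's proof is a four-line contradiction: if $r\in T^n_{p,q}\setminus S^n_{p,q}$, then some decomposition of $n$ ``leading to $r$'' is absent from the tree, which the exhaustive check forbids. Your version makes explicit two things the paper leaves implicit: (i) the actual correspondence, via Proposition~\ref{lem:prodm}, between solutions of a node equation $C^1_p\odot X=C^v_q$ and multiset partitions of $v$ into feasible contributions $g=\gcd(p,r)$ with $r=qg/p$ (this is what licenses the phrase ``a decomposition which leads to $r$,'' and your remarks on injectivity and on the vacuous case $p\nmid q$ are the right hygiene); and (ii) the observation that a given partition need not appear literally as a color at the node---it only needs to \emph{refine} one---so the induction must pass through grouping the components of $X$ along a color and invoking the inductive hypothesis on the subtree solution sets before taking the Cartesian product. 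Point (ii) is genuinely needed (the decomposition $[3,2,1]$ of $6$ in the paper's first example is never a color; it is realized only by refining $[3,3]$ through the subtrees), and the paper's proof glosses over it entirely. Both your proof and the paper's ultimately rest on the same unproven invariant, namely that after \textbf{checkRepresented} no partition of the node value into coins from $\mathcal{D}\setminus\{v\}$ lies outside the refinement-closure of the colors present; you are right to single this out as the real content and to demand it as a separate lemma, whereas the paper simply asserts it. So your proposal is correct exactly to the extent the paper's proof is, and is the more honest and complete of the two.
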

\begin{proof}
By contradiction, let us assume that there exists a solution $r \in T_{p,q}^n$ and that $r \not\in S_{p,q}^n$. This means that the colored-tree method does not return it. This implies that it exists a decomposition of $n$, which leads to $r$, such that this decomposition is not in the tree. This is impossible since, an exhaustive check is performed to assure that all the decompositions are there. Therefore, all solutions are returned.
\end{proof}

\begin{proposition}[Termination]
	The colored-tree method always terminates.
\end{proposition}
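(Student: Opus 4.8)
The plan is to prove termination by decomposing the algorithm into its three successive phases — tree building, the completeness check (\texttt{checkRepresented}), and solution aggregation — and showing that each performs only finitely many steps. Throughout, the key device is a well-founded measure on nodes, namely the positive integer value stored at each node, together with the finiteness of the divisor set $\mathcal{D}=\mathrm{divisors}(q)$.

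First I would treat the tree-building phase. A single call to MCDA terminates because it is a known making-change algorithm~\cite{AdamaszekA10}; what must be controlled is the recursion that re-invokes MCDA on the freshly generated nodes. The crucial observation is that when a node of value $v\neq 1$ is expanded, MCDA is called with coin set $\mathcal{D}\setminus\set{v}$, whose elements are divisors of $q$ all distinct from $v$. In any decomposition summing to $v$ with positive parts, no part can exceed $v$, and $v$ itself is excluded; hence every child node carries a value \emph{strictly smaller} than its parent, while nodes of value $1$ are never expanded (the guard \texttt{node[i]!=1}). Since the node values are positive integers strictly decreasing along every root-to-leaf branch, each branch has length at most $q$ and the branching factor at a node of value $v$ is bounded by $v$; therefore the subtree generated from the root is finite.

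Next I would handle the completeness check, which I expect to be the main obstacle, since it both adds nodes and retriggers MCDA recursions, so one might fear unbounded growth. The resolution is to exhibit a strictly monotone, bounded progress measure: the number of distinct decompositions of $n$ currently represented in the tree. The relevant decompositions are finite multisets of elements of $\mathcal{D}'=\mathcal{D}\setminus\set{n}$ summing to $n$; because $\mathcal{D}$ is finite and every part is at least $1$, there are only finitely many such multisets. Each pass of \texttt{checkRepresented} inserts one previously missing decomposition, assigns it a fresh color, and spawns new nodes on which MCDA recurses. Every such insertion strictly increases the number of represented decompositions, a quantity bounded above by the finite count just described, so only finitely many insertions occur; and by the strictly-decreasing-value argument of the previous paragraph each newly spawned subtree is itself finite. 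Hence the check terminates and leaves a finite tree.

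Finally, the aggregation phase iterates over this already-constructed finite tree, performing at each node finitely many Cartesian products and one union over the finitely many splits, so it clearly terminates. Combining the three phases, the \CT method halts after finitely many steps, which is what we wanted to establish.
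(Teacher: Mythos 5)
Your proof is correct and follows essentially the same route as the paper's: split the algorithm into the building/checking phases and the aggregation phase, show the constructed tree is finite (bounded depth and finitely many splits/colors), and observe that aggregation does finite work per node of a finite tree. In fact your version is more rigorous than the paper's, since you explicitly supply the well-founded measures (strictly decreasing node values along branches, and the finite, strictly increasing count of represented decompositions) that the paper's terse bounds on depth and on the number of colors merely assert; the only cosmetic point is that the completeness check also inserts decompositions at internal nodes, not just at the root, but your finiteness argument applies verbatim at each of the finitely many node values.
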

\begin{proof}
	The building phase always terminates since the colored-tree has maximal depth $\mathcal{D'}=div(q,n)$ and the number of different
	possible colors is bounded by $2^k$ where $k$ is the size of the multi-set containing $n/p_i$ copies of the divisor $p_i$ per each divisor in $\mathcal{D'}$. The aggregation phase always terminates since it performs a finite number of operations per each node of the colored tree.
\end{proof}

Now that we have defined the problem, its complexity and a sound and complete algorithm to solve it. 
It is time to experimentally evaluate it in order to study its scalability.

\section{\bf Experimental Evaluations}


The \CT method provides a complete set of solutions of simple equations of type Equation~\ref{eq:simple}. 
Its complexity can be experimentally measured counting the number of nodes in the colored tree.

\begin{figure}[]
\centering
	\includegraphics[width=0.65\textwidth]{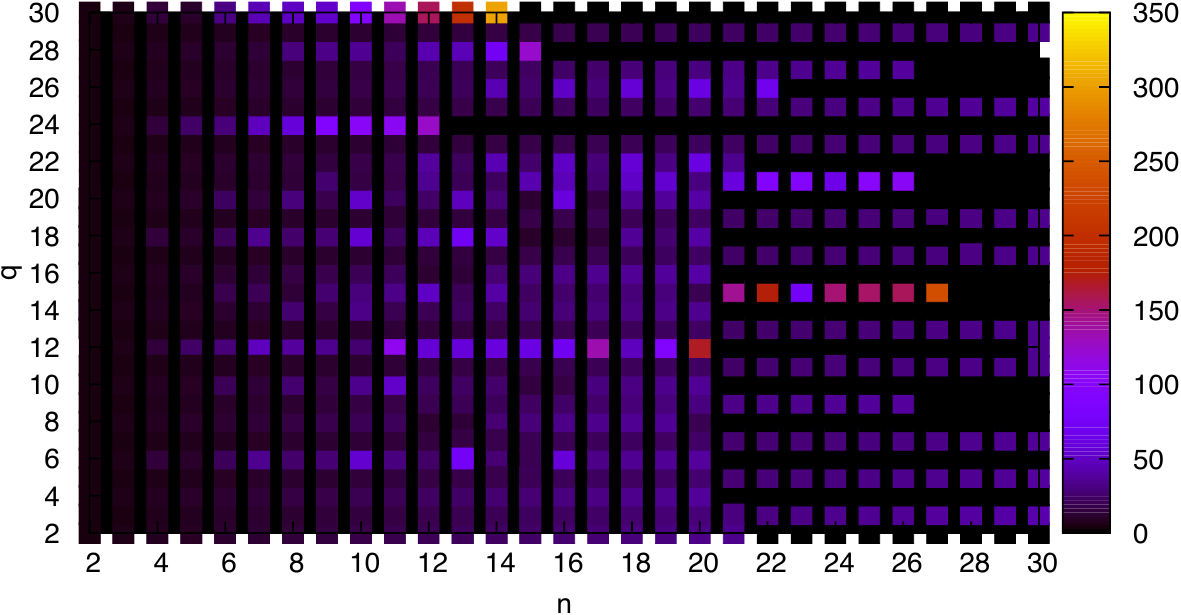}
	\caption{The number of nodes in the colored tree as a function of $n$ and $q$.}
	\label{nnode}
\end{figure}

Figure~\ref{nnode} shows how the complexity grows as a function of $n$ and $q$. For this case, we set $p=q$ to ensure that we always have at least one solution and therefore a tree-decomposition. Notice that, in some cases, the complexity is particularly high due to specific analytical relations between the input parameters that we are going to study in the future. Notice also that our method seems to have a weakness when $q$ is an even number. This is easily explained: in many cases, all the divisors can be expressed by the other ones. Therefore the check that ensures that all the decompositions are present is particularly time- and memory-consuming.

\begin{figure}[!t]
 	\centering
	\includegraphics[width=0.65\textwidth]{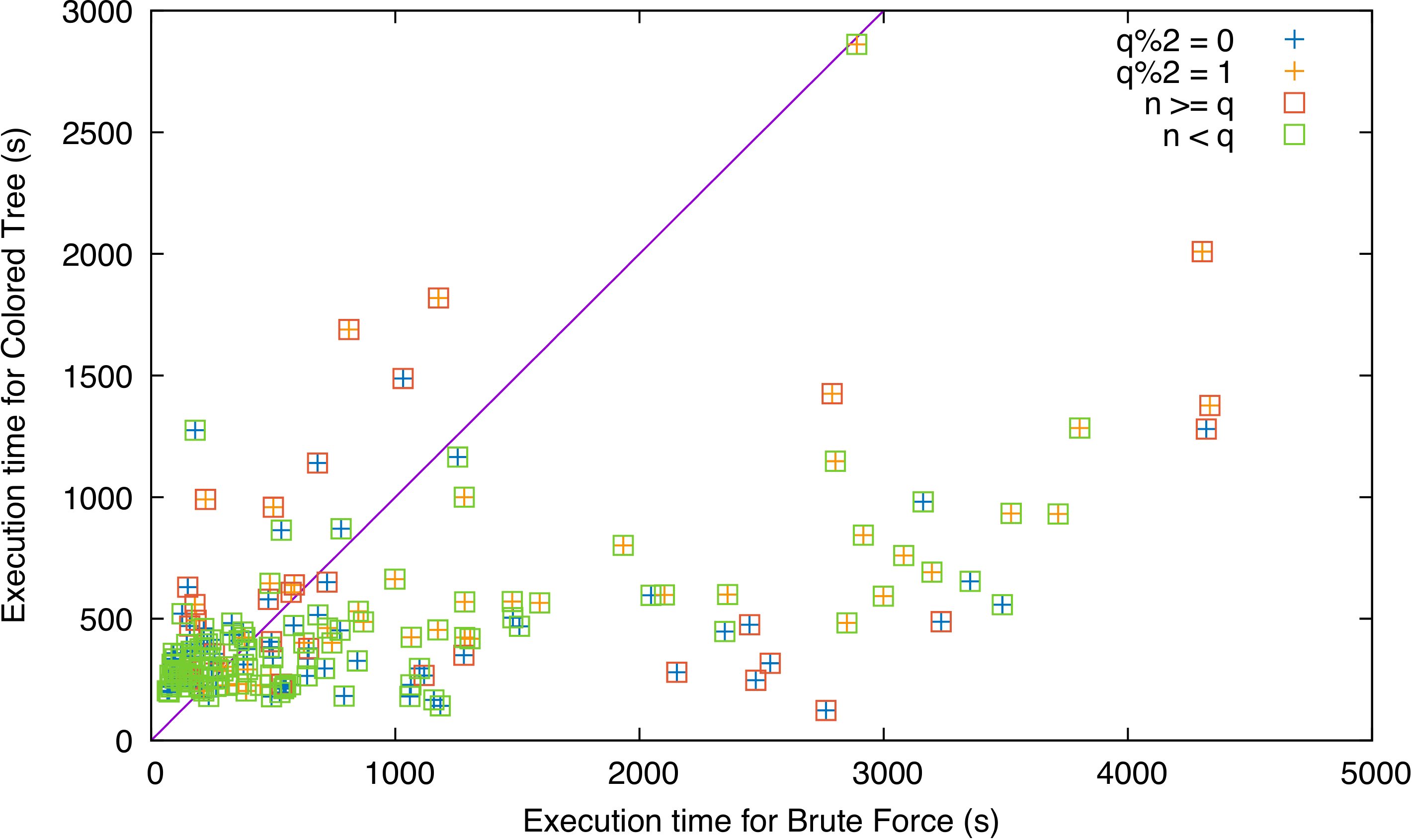}
	\caption{The brute force approach vs. \CT method \wrt execution time (in seconds).}
	\label{time}
\end{figure}
Since there is no other competitor algorithm at the best of our knowledge, we compared the \CT method to a brute force algorithm. 
We test our algorithm with $n$ from $1$ to $20$, $p$ is also from $1$ to $20$ and at any time, $p=q$.
Results are reported in Figure~\ref{time}. As expected, the \CT method outperforms the brute force solution, sometimes with many orders of magnitude faster.
However, when the input equation has small coefficients, the \CT method performs worse. 
This can be explained considering that building the needed data structures requires a longer time than the execution of the brute force algorithm.
\section{\bf Conclusion}
%

Questions about boolean automata networks, used in biological modelling for genetic regulatory networks and metabolic networks, can be rewritten as equations over DDS using the formalism introduced in Dennunzio et al. in \cite{dorigatti2018}.
They argued that polynomial equations are a convenient tool for the analysis of the dynamics of a system. However,
algorithmically solving such equations is an unfeasible task. In this article, we propose a practical way to 
partially overcome those difficulties using a couple of approaches which aims at studying separately the number of
component (\ie the number of attractors) and the length of their periods. This paper proposes an algorithm for the
number of components of the solution of a polynomial equation over finite DDS.

One of the core routines of the algorithm uses a brute force check for the make-change problem which clearly affects the overall performances. Therefore, a natural research direction consists in finding a better performing routine. One possibility would consider parallelisation since a large part of the computations are strictly indipendent. Another interesting research direction consists inbetter understanding the computational complexity of the \DSECP.
We are still working to improve the performances of the algorithm to have stronger scalability properties in the perspective of providing a handy tool which can be exploited by bioinformaticians to actually solve the \textit{Hypothesis Checking} problem in their context.

\section*{\bf Acknowledgments}

This work has been partly funded by IDEX UCA$^{\textsc{jedi}}$.

\bibliography{DDSE-L}

\end{document}